\newtheorem{thm}{Theorem} 
\newtheorem{theorem}{Theorem}
\newtheorem{Question}[thm]{Question}
\newtheorem*{theorem*}{Theorem}
\newtheorem{lemma}[thm]{Lemma}
\newtheorem{proposition}[thm]{Proposition}
\newtheorem{corollary}[thm]{Corollary}
\theoremstyle{definition}
\theoremstyle{definition}
\theoremstyle{definition}
\newtheorem{definition}[thm]{Definition}
\numberwithin{equation}{section}
\newcommand{\id}{\operatorname{id}}
\newcommand{\IM}{\operatorname{Im}}
\def\R{{\mathbb R}}
\def\E{{\mathbb E\,}}
\def\Z{{\mathbb Z}}
\def\N{{\mathbb N}}
\def\wt{\widetilde}
\def\PP{{\mathcal P}}
\def\PPP{{\mathscr{P}}}
\def\<{\langle}
\def\>{\rangle}
\def \ee{{\epsilon}}
\def \ll {{\lambda}}
\def \LL {{\Lambda}}
\def \H {{\mathds{H}}}
\begin{document}

\title{Finite flat spaces}


\begin{abstract}

We say that a finite metric space $X$ can be embedded almost isometrically into a class of metric spaces $C$, if for every $\epsilon > 0$ there exists an embedding of $X$ into one of the elements of $C$ with the bi-Lipschitz distortion less than $1 + \epsilon$.
We show that almost isometric embeddability conditions are equal for following classes of spaces

(a) Quotients of Euclidean spaces by isometric actions of finite groups,

(b) $L_2$-Wasserstein spaces over Euclidean spaces,

(c) Compact flat manifolds,

(d) Compact flat orbifolds,

(e) Quotients of connected compact bi-invariant Lie groups by isometric actions of compact Lie groups. (This one is the most surprising.)

We call spaces which satisfy this conditions finite flat spaces. 

Since Markov type constants depend only on finite subsets we can conclude that connected compact bi-invariant Lie groups and their quotients have Markov type $2$ with constant $1$.

\end{abstract}
\keywords{Isometric Embedding, Alexandrov spaces, Markov type}
\subjclass[2010]{51F99}

\author{Vladimir Zolotov}
\address[Vladimir Zolotov]{Steklov Institute of Mathematics, Russian Academy of Sciences, 27 Fontanka, 191023 St.Petersburg, Russia, University of Cologne, Albertus-Magnus-Platz, 50923 Köln, Germany and Mathematics and Mechanics Faculty, St. Petersburg State University, Universitetsky pr., 28, Stary Peterhof, 198504, Russia.}
\email[Vladimir Zolotov]{paranuel@mail.ru}

\maketitle

\section{Introduction}
\subsection{Motivation and the main result.}
Let $X,Y$ be a pair of metric spaces. For $f:X \rightarrow Y$ the bi-Lipschitz constant of $f$ is the infimum of $c \ge 1$ s.t.,
$$\frac{1}{c}d_Y(f(x_1),f(x_2)) \le d_X(x_1,x_2) \le c d_Y(f(x_1),f(x_2)), \text{for every $x_1,x_2 \in X$}.$$

We say that a finite metric space $X$ can be embedded almost isometrically into a class of metric spaces $C$ if for every $\epsilon > 0$ the exists an embedding of $X$ into one of the elements of $C$ with the bi-Lipshitz distortion less than $1 + \epsilon$.

The study of conditions for isometric embeddability of finite metric spaces into $L_p$ spaces has a long history.  For an overview of results see \cite{DL}. In the recent years the study of isometric embeddability conditions for Alexandrov spaces of non-negative curvarute started to develop, see \cite{AKP}, \cite{ANN}, \cite{LPZ}. Despite a certain progress in understanding necessary conditions, we lack embeddability results. Which motivates the study of isometric embeddability conditions for subclasses of Alexandrov spaces with even more restricted geometry.

One can consider following subclasses: compact flat manifolds, 
quotients of Euclidean spaces by isometric actions of finite groups,
 quotients of  connected compact bi-invariant Lie groups by isometric
 actions of compact Lie groups, $2$-Wasserstein spaces over Euclidean spaces. In the present work we prove that all those classes contain the same finite subspaces. More precisely we have the following theorem.



\begin{theorem} \label{MainThm}
Let $X$ be a finite metric space. Suppose that $X$ can be almost isometrically embedded into one of the following classes of spaces. 
\begin{enumerate}
\item{$L_2$-Wasserstein spaces over Euclidean spaces, }\label{FFDWass}
\item{Quotients of Euclidean spaces by isometric actions of finite groups,}\label{FFDQFG}
\item{Compact flat orbifolds,}\label{FFDFO}
\item{Compact flat manifolds,}\label{FFDFM}
\item{Quotients of  connected compact bi-invariant Lie groups by isometric actions of compact Lie groups. \label{FFDLie}
(Here the group acting by isometries doesn't have to be a subgroup of the original group.
 We also do not assume that the group acting by isometries is connected.
In particular finite groups are fine.)}
\item{$L_2$-Wasserstein spaces over connected compact bi-invariant Lie groups.}\label{FFDLieWass}
\end{enumerate}
Then $X$ can be almost isometrically embedded into all of those classes.
\end{theorem}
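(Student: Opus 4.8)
The plan is to introduce the class $\mathcal F$ of finite metric spaces admitting almost isometric embeddings into quotients of Euclidean spaces by finite groups (class~\eqref{FFDQFG}), establish a few closure properties of $\mathcal F$, and then check that each of the remaining classes has $\mathcal F$ as its almost-embeddability class; everything is formal except one input about bi-invariant Lie groups, which carries the real difficulty.

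\emph{Closure properties of $\mathcal F$.} The basic tool is the \emph{orbit embedding}: if a compact group $H$ acts by isometries on a metric space $Z$, then $Z/H$ embeds isometrically into $\mathcal{W}_2(Z)$ via $Hz\mapsto(h\mapsto hz)_\ast\mathrm{Haar}_H$ --- equivariant couplings give $W_2\le d_{Z/H}$, and the pointwise bound ``quadratic cost of any coupling $\ge$ squared quotient distance'' gives $W_2\ge d_{Z/H}$. Since conversely any finite subset of $\mathcal{W}_2(\R^m)$ is, up to distortion $1+\epsilon$, a finite subset of $N^{-1/2}\cdot(\R^{mN}/S_N)$ obtained by replacing each measure by an $N$-atom empirical measure, $\mathcal F$ is also the almost-embeddability class of~\eqref{FFDWass}. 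Running the same empirical-measure approximation over a general $Z$, and composing with an approximate embedding of the (finitely many) atoms into some $\R^d/G$, shows: if every finite subset of $Z$ lies in $\mathcal F$, then so does every finite subset of $\mathcal{W}_2(Z)$, because the approximating empirical measures then live in $(\R^d/G)^N/S_N=\R^{dN}/(G^N\rtimes S_N)$ --- a quotient of a Euclidean space by a finite group --- and the Wasserstein distance between empirical measures is the matching cost, which is faithful up to a factor $1+o(1)$. Together with the isometry $\mathcal{W}_2(Z_1)\times\mathcal{W}_2(Z_2)\hookrightarrow\mathcal{W}_2(Z_1\times Z_2)$, $(\mu,\nu)\mapsto\mu\otimes\nu$, and the orbit embedding for $Z/H$ itself, we conclude that $\mathcal F$ is closed under finite products, under quotients by isometric actions of compact groups, and under $Z\mapsto\mathcal{W}_2(Z)$ on finite subsets; and $\mathcal F$ trivially contains all finite subsets of Euclidean spaces.

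\emph{The formal comparisons.} A finite subset of a circle almost isometrically embeds into a quotient $\R^2/\Z_k$ of the plane by a large cyclic rotation group (place the points evenly around a large circle in a cone of small angle), so it lies in $\mathcal F$; product closure puts finite subsets of flat tori in $\mathcal F$, and since a compact flat orbifold is a quotient $T^n/F$ of a flat torus by a finite group (Bieberbach), quotient closure puts \eqref{FFDFO} and its subclass \eqref{FFDFM} in $\mathcal F$. For the Lie classes, the orbit embedding $G/K\hookrightarrow\mathcal{W}_2(G)$ together with the closure statement reduce $\eqref{FFDLie},\eqref{FFDLieWass}\subseteq\mathcal F$ to the single claim that finite subsets of a compact connected bi-invariant Lie group belong to $\mathcal F$. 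Conversely, members of $\mathcal F$ embed into \eqref{FFDFO} because a finite subset of $\R^m/G$ sits isometrically in $R\cdot(T^d/G)$ for $R$ large (embed $\R^m$ equivariantly into a Euclidean space carrying a $G$-invariant lattice $\Lambda$ and use $R\Lambda$); into \eqref{FFDLie} since \eqref{FFDFO}$\subseteq$\eqref{FFDLie} (a flat orbifold is a quotient of the bi-invariant torus $T^n$); into \eqref{FFDWass} and \eqref{FFDLieWass} by the same large-torus device; and into \eqref{FFDFM} by desingularisation --- replace each torsion generator of the crystallographic group defining $R\cdot(T^d/G)$ by a screw or glide motion, adjoining auxiliary coordinates on which the point group acts trivially together with a translation of size $\delta$, so that the resulting crystallographic groups $\Gamma_\delta$ are torsion-free (that torsion can be killed this way is classical, going back to Auslander--Kuranishi) and their compact flat manifold quotients approximate $\R^m/G$ with distortion $\to1$ as $\delta\to0$.

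\emph{The main obstacle.} It remains to show --- the surprising assertion --- that finite subsets of a compact connected Lie group $G$ with bi-invariant metric lie in $\mathcal F$. The splitting of the universal cover of $G$ as $\R^k\times\prod_iG_i$ with simply connected compact simple factors, a finite central covering group acting by translations, and the product/quotient closure of $\mathcal F$, reduce this to $G$ simply connected compact simple. Morally, $G$ is non-negatively curved with Euclidean tangent cone $\mathfrak g$, so the rescalings $\lambda\cdot G$ converge in the pointed Gromov--Hausdorff sense to a flat space as $\lambda\to\infty$; what makes the argument genuinely non-formal is that a prescribed finite subset $X\subset G$ is macroscopic and is destroyed by any blow-up, so local flatness cannot be quoted directly. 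One must instead argue that any almost isometric embedding of a finite set into a bi-invariant Lie quotient $G/K$ can be replaced by an almost isometric embedding into a tangent cone of some such quotient --- i.e.\ into a Euclidean space modulo a compact group, which lies in $\mathcal F$ by one more application of the orbit embedding and the closure statement --- and this has to exploit the homogeneity and the algebraic structure of the bi-invariant metric, not merely that tangent cones are Euclidean (which alone is far too weak, as non-homogeneous manifolds show). I expect this reduction to absorb essentially all of the work; granting it, the six almost-embeddability classes coincide, and the Markov type assertion follows because Markov type constants are determined by finite subsets.
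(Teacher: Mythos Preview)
Your architecture is sound and you have correctly isolated the one genuinely hard step: showing that every finite subset of a compact connected bi-invariant Lie group $G$ lies in $\mathcal F$. But you do not prove this step --- you write ``I expect this reduction to absorb essentially all of the work; granting it\dots'' --- and the tangent-cone heuristic you offer is not a proof. The obstacle you yourself flag (the given finite set is macroscopic, so local flatness of $G$ is useless as stated) is exactly the point, and ``exploit the homogeneity and the algebraic structure of the bi-invariant metric'' names what must be used without supplying a mechanism.

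The paper's mechanism is a concrete algebraic identity: for a connected bi-invariant Lie group $G$, the diagonal quotient $(\sqrt 2\,G\times\sqrt 2\,G)/G$ is \emph{isometric} to $G$. Iterating $m$ times realises $G$ as an isometric quotient of $(MG)^{M^2}$ with $M=2^{m/2}$; composing the corresponding chain of Wasserstein lifts gives an isometric embedding $\mathcal W_2(G)\hookrightarrow\mathcal W_2\big((MG)^{M^2}\big)$. Now fix a Nash $C^1$ Riemannian isometric embedding $f:G\to E^k$ and push forward along $F_m=(f^{(M)},\dots,f^{(M)}):(MG)^{M^2}\to E^{kM^2}$. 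Because the image in $\mathcal W_2\big((MG)^{M^2}\big)$ has diameter bounded by $\operatorname{diam}G$ while each factor has diameter $M\cdot\operatorname{diam}G$, all comparisons occur at scale $O(1/M)$ in $G$, where $f$ and $f^{-1}$ are $(1+o(1))$-bi-Lipschitz. Hence $\mathcal W_2(G)\to\mathcal W_2(E^{kM^2})$ has distortion tending to $1$, which is precisely the missing ingredient; your closure properties then finish.

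Two smaller remarks. First, ``product closure puts finite subsets of flat tori in $\mathcal F$'' is only immediate for rectangular tori; Bieberbach gives a general flat torus, and you have not shown that an arbitrary flat torus is covered by a product of circles. In the paper this issue never arises, since tori are themselves bi-invariant Lie groups and are handled by the same identity above. Second, your route from $\mathcal F$ to compact flat manifolds via Auslander--Kuranishi desingularisation differs from the paper's (which instead quotes that every compact flat orbifold is a Gromov--Hausdorff limit of closed flat manifolds); both are legitimate.
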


We call spaces which could be almost isometrically embedded into all classes from Theorem \ref{MainThm}   \textit{finite flat spaces}.

The main claim of Theorem \ref{MainThm} is $(\ref{FFDLie}) \Rightarrow (\ref{FFDQFG})$.  In particular we have that finite subspace of 
an $n$-dimensional sphere with its standard intrinsic metrics could be almost isometrically embedded into quotients of Euclidean spaces by isometric actions of finite groups.  

Previously it was known that a bi-quotient of a compact connected Lie group with a bi-invariant metric could be presented as a quotient of Hilbert space by an isometric action of a certain group, see \cite{Devil} Problem ``Quotient of Hilbert space'', 
\cite{LPZ} proof of Proposition 1.5 and \cite{TT} Section 4.

\subsection{Question about  a synthetic definition.}
Let $T$ be a non-oriented tree with $n$ vertexes, we denote by $V(T)$ the set of its vertices and by $E(T)$ the set of its edges. 
For a metric space $X$ we say that $T$ comparison holds in $X$, if for every map $f:V(T) \rightarrow X$ there exists a map into Hilbert space $\wt f:V(T) \rightarrow \H$ s.t.,
\begin{enumerate}
\item{$d_X(f(v_1),f(v_2)) \le \vert \wt f(v_1) - \wt f(v_2)\vert $, for every $v_1, v_2 \in V(T)$.}
\item{$d_X(f(v_1),f(v_2)) = \vert \wt f(v_1) - \wt f(v_2)\vert $, for every $\{v_1,v_2\} \in E(T)$.}
\end{enumerate}

For the theory related to $T$-comparison see \cite{LPZ}. The following question is a stronger version of \cite[Question 10.2]{LPZ}. 

\begin{Question}
Let $X$ be a finite metric space such that $T$-comparison holds in $X$ for every finite tree $T$. Is is true that $X$ has to be a  finite flat space?
\end{Question}

\subsection{Application to the theory of Markov type.}

Recall that a Markov chain $\{Z_t\}^\infty_{t=0}$  with transition probabilities $a_{ij} := Pr[Z_{t+1} = j\vert Z_t = i]$ on the state space
$\{1,\dots,n\}$ is stationary if $\pi_i = Pr[Z_t = i]$ does not depend on $t$ and it is reversible if $\pi_i a_{ij} = \pi_j a_{ji}$
for every $i, j \in \{1, . . . , n\}$.

\begin{definition}Given a metric space $(X, d)$, we say that $X$ has Markov
type $2$ if there exists a constant $K > 0$ such that for every stationary reversible Markov chain
 $\{Z_t\}^\infty_{t=0}$ on $\{1,\dots,n\}$, every mapping $f : \{1, . . . , n\} \rightarrow X$ and every time $t \in \N$,
$$\E d^2(f(Z_t), f(Z_0)) \le K^2 t \E d^2(f(Z_1), f(Z_0)).$$
The least such $K$ is called the Markov type $2$ constant of $X$, and is denoted $M_2(X)$.
\end{definition}

The notion of Markov type was introduced by K. Ball in his study of the Lipschitz extension problem \cite{Ball}. 
Major results in this direction were obtained later by
Naor, Peres, Schramm and Sheffield \cite{NPSS}. The notion of Markov type has
also found applications in the theory of bi-Lipschitz embeddings \cite{BLMN, LMN}.
For more applications of the notion of Markov type and its place in a bigger picture 
see a survey \cite{RibeIntro}.  

S.-I. Ohta and M. Pichot discovered that the notion of Markov Type $2$ is related to the non-negative curvature in the sense of Alexandrov.
In \cite{OP} they showed that if a geodesic metric space has Markov type $2$ with constant $1$, then it is an Alexandrov space of non-negative curvature.  In \cite{Ohta} it was shown that every Alexandrov space has Markov type $2$ with constant $1 + \sqrt{2}$. The constant was later improved to $\sqrt{1+\sqrt{2} + \sqrt{4\sqrt{2}-1} } = 2.08\dots$ by A.Andoni, A.Naor and O. Neiman, see \cite{ANN}.

It is even possible that all Alexandrov spaces have Markov type $2$ with constant $1$. In \cite{ZMT} it was shown that some Alexandrov spaces such as compact flat manifolds, quotients of Euclidean spaces by isometric actions of finite groups and $2$-Wasserstein spaces over Euclidean spaces do have Markov type $2$ with constant $1$. The following corollary extends this list.

\begin{corollary}
Let $M$ be a quotient of a  connected compact bi-invariant Lie group by an isometric action of a compact Lie group. Then $M$ has Markov type $2$ with 
constant $1$.  In particular standard spheres with their intrinsic metrics have Markov type $2$ with constant $1$.
\end{corollary}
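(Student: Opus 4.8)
The plan is to derive the Corollary from Theorem \ref{MainThm} together with the fact — already established in \cite{ZMT} — that quotients of Euclidean spaces by isometric actions of finite groups have Markov type $2$ with constant $1$. The bridge between the two is the standard observation that the Markov type $2$ constant of a metric space is determined by its finite subspaces, and is moreover stable under almost isometric embeddings.

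First I would recall why $M_2$ depends only on finite subsets. Fix a metric space $(X,d)$, a stationary reversible Markov chain $\{Z_t\}$ on $\{1,\dots,n\}$, a map $f:\{1,\dots,n\}\to X$ and a time $t\in\N$. The quantities $\E d^2(f(Z_t),f(Z_0))$ and $\E d^2(f(Z_1),f(Z_0))$ depend only on the finitely many values $d(f(i),f(j))$, $i,j\in\{1,\dots,n\}$, i.e.\ only on the finite subspace $f(\{1,\dots,n\})\subseteq X$. Hence if every finite subspace of $X$ has Markov type $2$ with constant $1$, then so does $X$. Next, the constant passes through almost isometric embeddings: if a finite metric space $F$ embeds with bi-Lipschitz distortion $1+\epsilon$ into a space $Y$ with $M_2(Y)\le 1$, then replacing $d_F$ by the pulled-back metric changes both sides of the Markov type inequality by a factor at most $(1+\epsilon)^2$, so $M_2(F)\le (1+\epsilon)^2$; letting $\epsilon\to 0$ gives $M_2(F)\le 1$. (The reverse inequality $M_2\ge 1$ is trivial, taking $f$ injective into any space containing at least two points at distinct distances, or simply noting $t=1$ forces $K\ge 1$; in any case only the upper bound is needed.)

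Now let $M$ be a quotient of a connected compact bi-invariant Lie group by an isometric action of a compact Lie group, and let $F\subseteq M$ be an arbitrary finite subspace. Then $F$ is almost isometrically embeddable into class \eqref{FFDLie} of Theorem \ref{MainThm} — trivially, via the inclusion $F\hookrightarrow M$, with distortion $1$. By Theorem \ref{MainThm}, $F$ is therefore almost isometrically embeddable into class \eqref{FFDQFG}, the quotients of Euclidean spaces by isometric actions of finite groups. Each such quotient has Markov type $2$ with constant $1$ by \cite{ZMT}, so by the stability remark of the previous paragraph $M_2(F)\le 1$. Since $F\subseteq M$ was arbitrary and $M_2$ is controlled by finite subspaces, $M_2(M)\le 1$, hence $M_2(M)=1$. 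Finally, the round sphere $\SS^n$ with its intrinsic metric is itself such a quotient: $\SS^n = SO(n+1)/SO(n)$, and the bi-invariant metric on $SO(n+1)$ induces (a rescaling of) the round metric on the homogeneous space $\SS^n$; alternatively $\SS^n$ is the base of the Riemannian submersion from $SO(n+1)$ by the isometric $SO(n)$-action fixing a point. Thus standard spheres have Markov type $2$ with constant $1$.

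The only genuinely non-routine input is Theorem \ref{MainThm}, specifically the implication $\eqref{FFDLie}\Rightarrow\eqref{FFDQFG}$, which the excerpt flags as its main claim; once that is granted, the Corollary is a formal consequence, and the sole point requiring care is the elementary but necessary verification that the Markov type $2$ inequality is stable under bi-Lipschitz distortions tending to $1$ and depends only on finite subconfigurations — both of which follow directly from the definition.
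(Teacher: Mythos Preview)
Your proof is correct and follows essentially the same route as the paper: use Theorem \ref{MainThm} to almost isometrically embed the finite configuration $f(\{1,\dots,n\})$ into a class already known from \cite{ZMT} to have Markov type $2$ with constant $1$, then pass the inequality through the $(1+\epsilon)$-distortion and let $\epsilon\to 0$. The only cosmetic difference is that the paper lands in class \eqref{FFDFM} (compact flat manifolds) while you land in class \eqref{FFDQFG} (finite quotients of Euclidean space); both are covered by \cite{ZMT}, so this is immaterial.
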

\begin{proof}
For a Markov chain $\{Z_t\}^\infty_{t=0}$ and  a  map $f : \{1, . . . , n\} \rightarrow M$ we have to show that 
$$\E d^2(f(Z_t), f(Z_0)) \le t \E d^2(f(Z_1), f(Z_0)),\textit{ for every $t \in \N$.}$$
Fix $\ee > 0$. By Theorem \ref{MainThm} there exists a compact flat manifold $M_\ee$ and a map $f_\ee:\{1, . . . , n\} \rightarrow M$ such that for every $i,j \in \{1, . . . , n\}$ we have
\begin{equation}\label{FMeq}
\frac{1}{(1+\ee)}d_{M_\ee}(f_\ee(i),f_\ee(j)) \le d_M(f(i),f(j)) \le (1 + \ee) d_{M_\ee}(f_\ee(i),f_\ee(j)).
\end{equation}
Since compact flat manifolds have Markov type $2$ with constant $1$ we have 
$$\E d^2(f_\ee(Z_t), f_\ee(Z_0)) \le t \E d^2(f_\ee(Z_1), f_\ee(Z_0)),\textit{ for every $t \in \N$.}$$
Combining the last inequality with (\ref{FMeq}) we obtain 
$$\E d^2(f(Z_t), f(Z_0)) \le (1+\ee)^2t \E d^2(f(Z_1), f(Z_0)),\textit{ for every $t \in \N$.}$$ 
Since $\ee$ is arbitrary we conclude that
$$\E d^2(f(Z_t), f(Z_0)) \le t \E d^2(f(Z_1), f(Z_0)),\textit{ for every $t \in \N$.}$$
\end{proof}

\section{Preliminaries and  notation}

Let $X$ be a metric space, $n \in \N$ and $\ll > 0$. We denote by $\ll X$ a metric space with a scaled metric 
$$d_{\ll X}(x,y) = \ll d_{X}(x,y),$$
by $X \times X$ a metric space on a Cartesian product given by 
$$d_{X \times X}((x_1,x_2), (y_1,y_2))^2 = d_{X}(x_1,y_1)^2 + d_{X}(x_2,y_2)^2.$$
And by $X^n$ the corresponding power of $X$,
$$X^n = X \times \dots \times X,\text{ ($n$ times)}.$$  
We denote by $S_n$ we denote a symmetric group. Group $S_n$ acts by permutations on a metric space $X^n$ we denote the 
corresponding metric quotient by $X^n\overset{perm}{/}S_n$.

For a map $f:X \rightarrow Y$ between metric spaces $X$ and $Y$ we denote by $\vert f \vert_{Lip}$ the Lipschitz constant of $f$ i.e.,
$$\vert f \vert_{Lip}  = \sup_{x_1,x_2 \in X}\frac{d_Y(f(x_1),f(x_2))}{d_X(x_1,x_2)} \in [0,\infty].$$

Now we are going to recall the definition of $2$-Wasserstein spaces. For a metric space $X$ we denote by $\PPP(X)$ the set of 
Borel probability measures with finite $2$-nd moment which means that
 $$ \exists  o \in X: \int_Xd^2(x,o)d\mu(x) < \infty.$$
Let $\mu,\nu \in \PPP(X)$. We say that a measure $q$ on $X \times X$ is a coupling of $\mu$ and $\nu$ iff its marginals are $\mu$ and $\nu$, that is 
$$q(A \times X) = \mu(A), q(X \times A) = \nu(A),$$
for all Borel measurable subsets $A \subset X$.
The $2$-Wasserstein distance $d_{W^2}$ between $\mu$ and $\nu$ is defined by 
$$d_{W_2}(\mu,\nu) = \inf\Big\{\Big(\int_{X \times X}d^2(x,y)dq(x,y)\Big)^\frac{1}{2}: \text{$q$ is a coupling of $\mu$ and $\nu$ }\Big\}.$$
The $2$-Wasserstein space $\PPP(X)$ is the set of Borel probability measures with finite $2$-nd moment on $X$ equipped with $2$-Wasserstein distance.  

\section{Proof of Theorem \ref{MainThm}}
The scheme of proof is cyclic $(\ref{FFDWass}) \Rightarrow (\ref{FFDQFG}) \Rightarrow (\ref{FFDFO}) \Rightarrow (\ref{FFDFM}) \Rightarrow (\ref{FFDLie}) \Rightarrow (\ref{FFDLieWass})  \Rightarrow (\ref{FFDWass})$ and the only really new arrow is  $(\ref{FFDLieWass})  \Rightarrow (\ref{FFDWass})$.

The arrow $(\ref{FFDWass}) \Rightarrow (\ref{FFDQFG})$ is a direct implication of the following  observation which is due to Sergey  V. Ivanov and appears in \cite{ZMT}, see Lemma 6.1 and the proof of Proposition 6.2.
\begin{lemma}
For a metric space $X$ there exist a sequence of isometric embeddings $${\Phi_n:X^{2^n}\overset{perm}{/}S_{2^n} \rightarrow \PPP(X)}$$ such that images $I_n := \Phi_n\big(X^{2^n}\overset{perm}{/}S_{2^n}\big)$   satisfy, 
\begin{enumerate}
\item{
$I_n  \subset I_{n+1},\text{ for every $n \in \N$},$
}
\item{
$\cup_{n = 1}^{\infty}I_n$ is dense in $\PPP(X).$
}
\end{enumerate}
\end{lemma}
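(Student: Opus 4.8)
We only outline the argument; the details are those of \cite{ZMT}. A point of $X^{2^n}\overset{perm}{/}S_{2^n}$ is an unordered $2^n$-tuple (a multiset) of points of $X$, which we write as $\{x_i\}=\{x_1,\dots,x_{2^n}\}$. The plan is to associate to it the uniform empirical measure
\[
\Phi_n(\{x_i\})\ :=\ \frac{1}{2^n}\sum_{i=1}^{2^n}\delta_{x_i}\ \in\ \PPP(X).
\]
This is well defined because the right-hand side does not depend on the ordering of the $x_i$, and it lands in $\PPP(X)$ since a finitely supported measure has finite second moment.

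The first step is to check that $\Phi_n$ is isometric, with the natural $\tfrac{1}{2^n}$-normalization of the quotient metric on $X^{2^n}\overset{perm}{/}S_{2^n}$ (without that normalization $\Phi_n$ is a homothety of ratio $2^{-n/2}$, which is equally harmless in every application). A coupling $q$ of $\tfrac{1}{2^n}\sum_i\delta_{x_i}$ and $\tfrac{1}{2^n}\sum_j\delta_{y_j}$ becomes, after multiplication by $2^n$, a doubly stochastic $2^n\times 2^n$ matrix $B$, and the transport cost $\int d^2\,dq$ is the linear functional $\tfrac{1}{2^n}\sum_{i,j}B_{ij}\,d(x_i,y_j)^2$ of $B$. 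By the Birkhoff--von Neumann theorem the minimum of a linear functional over the Birkhoff polytope is attained at a permutation matrix, hence
\[
d_{W_2}\big(\Phi_n(\{x_i\}),\Phi_n(\{y_j\})\big)^2\ =\ \frac{1}{2^n}\,\min_{\sigma\in S_{2^n}}\sum_{i=1}^{2^n}d(x_i,y_{\sigma(i)})^2,
\]
which is exactly the squared quotient distance; in particular $\Phi_n$ is injective, so it is an embedding.

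Next, for $I_n\subset I_{n+1}$ I would use the ``doubling'' map sending $\{x_1,\dots,x_{2^n}\}$ to $\{x_1,x_1,x_2,x_2,\dots,x_{2^n},x_{2^n}\}$: it is a well-defined map $X^{2^n}\overset{perm}{/}S_{2^n}\to X^{2^{n+1}}\overset{perm}{/}S_{2^{n+1}}$, and $\Phi_{n+1}$ applied to a doubled multiset equals $\Phi_n$ applied to the original one, so $I_n\subseteq I_{n+1}$. One then observes that $\bigcup_n I_n$ is exactly the set of finitely supported probability measures on $X$ all of whose atoms have dyadic-rational weight.

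It remains to show that this set is $d_{W_2}$-dense in $\PPP(X)$, which is the step I expect to require the most care. The plan there is: given $\mu\in\PPP(X)$ and $\varepsilon>0$, first \emph{truncate} --- using finiteness of the second moment, transport all mass outside a large ball $B(o,R)$ to the center $o$, at $W_2$-cost tending to $0$ as $R\to\infty$; then \emph{discretize} --- using tightness of $\mu$ (automatic when $X$ is separable and complete, which covers all the applications) so that the essential mass lies in a totally bounded set, partition it into finitely many Borel pieces of small diameter and collapse each to one of its points, again at small $W_2$-cost; finally \emph{round the weights} --- the measure now lives on a fixed finite set, so replacing its weights by nearby dyadic rationals summing to $1$ changes it by an arbitrarily small amount in $W_2$. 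The tail-and-tightness bookkeeping for a general, possibly non-proper metric space $X$ is the genuine content; it is the argument of \cite[Lemma 6.1 and the proof of Proposition 6.2]{ZMT}, which I would follow.
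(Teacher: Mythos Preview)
The paper does not actually prove this lemma: it attributes the observation to S.~V.~Ivanov and simply cites \cite[Lemma~6.1 and the proof of Proposition~6.2]{ZMT}. Your sketch is a correct outline of that standard argument (empirical-measure map, Birkhoff--von~Neumann for the isometry, doubling for the nesting, truncate/discretize/round for density), and you even flag the $2^{-n/2}$ scaling discrepancy that the paper's wording glosses over; since the application only needs a scaled isometric copy of $X^{2^n}\overset{perm}{/}S_{2^n}$ inside $\PPP(X)$, this is indeed harmless.
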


Now we are going to provide $(\ref{FFDQFG})  \Rightarrow (\ref{FFDFO})$. 
Let  $X$  be a finite subspace of a quotient space $\R^n/G$, 
where $G$ is a finite group acting by isometries.
There exists an Euclidean space $\R^m$ and an action $\rho_1$ of $G$ on $\R^m$  
by permutation of coordinates such that 
$X$ can be embedded isometrically into $\R^m/\rho_1$, 
see for example \cite[Corollary 1]{ZS}. 
For $M > 0$ we denote by $\rho_2^{(M)}$ an action of $\Z^m$ on $\R^m$ 
by shifts scaled by $M$, i.e.
$$(\rho_2^{(M)}(a_1,\dots,a_m))(x_1,\dots,x_m) = (x_1 + Ma_1,\dots, x_m + Ma_m).$$
Note that the product action $\rho_1 \times \rho_2^{(M)}$ 
is discrete and the fundamental domain is bounded. 
Thus $\R^m/(\rho_1 \times \rho_2^{(M)})$ is a compact flat orbifold.
 If $M$ is big enough then $\R^m/(\rho_1 \times \rho_2^{(M)})$ contains an isometric copy of $X$.

 An implication $ (\ref{FFDFO}) \Rightarrow (\ref{FFDFM})$  follows from the next proposition, see \cite[Proposition 3.3]{LytchaksFriends}.
\begin{proposition} 
 Any flat orbifold is the Gromov-Hausdorff limit of a sequence of closed flat manifolds.
\end{proposition}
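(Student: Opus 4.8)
The plan is to build the approximating manifolds as quotients of products $\R^n\times(\epsilon T)$, collapsing a flat torus factor $T$ as $\epsilon\to 0$. Write the (compact) flat orbifold as $O=\R^n/\Gamma$, where $\Gamma\le\operatorname{Isom}(\R^n)=\R^n\rtimes O(n)$ is a discrete cocompact group, not assumed torsion-free. By the first Bieberbach theorem the subgroup of pure translations $\Lambda:=\Gamma\cap\R^n$ is a lattice of full rank, normal and of finite index in $\Gamma$, so that the point group $F:=\Gamma/\Lambda$ is finite. The one external input I would invoke is the theorem of Auslander and Kuranishi that every finite group occurs as the holonomy group of a closed flat manifold; concretely, this provides a lattice $\Lambda'\le\R^N$ and a \emph{free} isometric action of $F$ on the flat torus $T:=\R^N/\Lambda'$ (with $T/F$ the corresponding flat manifold). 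Freeness of this $F$-action is the property that makes the whole construction work, and producing it is the only genuinely non-formal ingredient of the argument.

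Given such a $T$, for each $\epsilon>0$ I would let $\Gamma$ act on the complete flat manifold $Z_\epsilon:=\R^n\times(\epsilon T)$ diagonally: on the first factor by the given action, and on the second factor through the quotient homomorphism $\Gamma\to F$ followed by the free $F$-action on $T$. Then I would verify the routine points: this action is by isometries for the product metric; it is properly discontinuous (since $\epsilon T$ is compact and $\Gamma$ acts properly discontinuously on $\R^n$) and cocompact (since $O$ and $\epsilon T$ are compact); and it is \emph{free}, because if $\gamma\in\Gamma$ fixes $(x,t)$ then its image $\bar\gamma\in F$ fixes $t\in\epsilon T$, hence $\bar\gamma=e$ by freeness of the $F$-action, hence $\gamma\in\Lambda$ is a translation fixing $x$, hence $\gamma=e$. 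Consequently $M_\epsilon:=Z_\epsilon/\Gamma$ is a closed, connected, flat Riemannian manifold.

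It then remains to estimate the Gromov--Hausdorff distance, for which I would use the natural projection $\pi_\epsilon:M_\epsilon\to O$, $[x,t]\mapsto[x]$, which is well defined, surjective, and $1$-Lipschitz, and for $p=[x,t]$, $q=[x',t']$ compute from the quotient metric
$$d_{M_\epsilon}(p,q)=\inf_{\gamma\in\Gamma}\sqrt{d_{\R^n}(x,\gamma x')^2+d_{\epsilon T}(t,\bar\gamma t')^2}.$$
This is bounded below by $\inf_{\gamma}d_{\R^n}(x,\gamma x')=d_O(\pi_\epsilon p,\pi_\epsilon q)$, and choosing $\gamma_0$ realizing that infimum over the discrete orbit $\Gamma x'$ yields the upper bound $d_{M_\epsilon}(p,q)\le\sqrt{d_O(\pi_\epsilon p,\pi_\epsilon q)^2+(\epsilon\operatorname{diam}T)^2}\le d_O(\pi_\epsilon p,\pi_\epsilon q)+\epsilon\operatorname{diam}T$. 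Hence the graph of $\pi_\epsilon$ is a correspondence of distortion at most $\epsilon\operatorname{diam}T$, so $d_{GH}(M_\epsilon,O)\le\epsilon\operatorname{diam}T\to0$ and $\{M_{1/k}\}_{k\in\N}$ is the required sequence of closed flat manifolds. The main obstacle is thus entirely concentrated in the Auslander--Kuranishi step, which is exactly what turns a potentially singular quotient into a manifold; the rest is bookkeeping with quotient metrics and the elementary inequality $\sqrt{a^2+b^2}\le a+b$.
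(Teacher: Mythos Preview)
The paper does not supply its own proof of this proposition; it simply quotes the result from an external reference and uses it as a black box for the implication $(3)\Rightarrow(4)$ in the main theorem. So there is no in-paper argument to compare against.

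Your argument is correct and is in fact the standard route to this statement. Bieberbach extracts the finite holonomy group $F=\Gamma/\Lambda$; Auslander--Kuranishi furnishes a flat torus $T$ carrying a \emph{free} isometric $F$-action (your observation that freeness of $\Gamma'$ on $\R^N$ forces freeness of $F=\Gamma'/\Lambda'$ on $T=\R^N/\Lambda'$ is exactly right); the diagonal $\Gamma$-action on $\R^n\times(\epsilon T)$ is then free, properly discontinuous, cocompact and isometric, so $M_\epsilon$ is a closed flat manifold; and your distortion bound $|d_{M_\epsilon}(p,q)-d_O(\pi_\epsilon p,\pi_\epsilon q)|\le\epsilon\operatorname{diam}T$ via the $1$-Lipschitz projection and a minimizing $\gamma_0$ is clean. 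One small remark: the proposition as printed omits the word ``compact'', while you insert it; only the compact case is used in the paper, and that is precisely the case your construction handles.
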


The  next arrow is $(\ref{FFDFM}) \Rightarrow (\ref{FFDLie})$. By Bieberbach’s Theorem \cite{Bie1,Bie2} a flat manifold can be presented as a quotient of a flat torus by an isometric action of a finite group.  Thus, we have $(\ref{FFDFM}) \Rightarrow (\ref{FFDLie})$.

To provide $ (\ref{FFDLie}) \Rightarrow (\ref{FFDLieWass})$ we simply apply the following proposition, see  \cite[Theorem 3.2]{plift}.

\begin{proposition}\label{LiftLem}
Let $M$ be a compact Riemannian manifold and $\rho:G \rightarrow Iso(M)$ be an action by isometries of a compact Lie group $G$ on $M$.
Let $N$ denotes the corresponding quotient space. 
There exists a lifting map 
$$\LL: \PP(N) \rightarrow \PP(M),$$ 
such that for every $\mu, \nu \in \PP(N)$  we have 
$$d_{W_2}(\LL(\mu),\LL(\nu)) = d_{W_2}(\mu, \nu),$$
$$\rho_{\sharp}(\LL(\mu)) = \mu,$$
where $\rho_{\sharp}:M \rightarrow N$ is the projection.
\end{proposition}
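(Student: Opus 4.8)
The plan is to build the lifting map $\LL$ by hand using optimal transport and a measurable selection / disintegration argument. First I would recall the structure of the quotient: the projection $\rho_\sharp : M \to N$ has fibers that are the $G$-orbits, and the quotient metric on $N$ is given by $d_N(\rho_\sharp(x),\rho_\sharp(y)) = \min_{g \in G} d_M(x, gy)$. I would fix once and for all a measurable section data: since $G$ is a compact Lie group acting on a compact manifold $M$, there is a Borel fundamental domain, or more conveniently a Borel map $s : N \to M$ with $\rho_\sharp \circ s = \id_N$, together with the normalized Haar measure $m_G$ on $G$. Using these, for a point $\bar x \in N$ one obtains a canonical probability measure on the fiber $\rho_\sharp^{-1}(\bar x)$, namely the pushforward of $m_G$ under $g \mapsto g \cdot s(\bar x)$; call it $\theta_{\bar x}$. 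This is the "spreading the mass uniformly over the orbit" recipe, and the family $\{\theta_{\bar x}\}$ is measurable in $\bar x$.

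Next I would define a candidate lift by $\LL_0(\mu) := \int_N \theta_{\bar x}\, d\mu(\bar x)$, i.e. disintegrate along the orbits using Haar measure. One checks immediately that $\rho_\sharp {}_\sharp (\LL_0(\mu)) = \mu$ and that $\LL_0(\mu) \in \PP(M)$ since $M$ is compact. The issue is that $\LL_0$ is $1$-Lipschitz for $W_2$ but need not be an isometry. To fix the isometry property, I would argue as follows: given $\mu, \nu \in \PP(N)$, take an optimal coupling $q \in \PP(N \times N)$ for $d_{W_2}(\mu,\nu)$, and use a measurable selection theorem to choose, for $q$-a.e. pair $(\bar x, \bar y)$, an element $g(\bar x, \bar y) \in G$ realizing the minimum $d_M(s(\bar x), g \cdot s(\bar y)) = d_N(\bar x, \bar y)$ (the set of such minimizers is a nonempty closed subset of $G$, depending measurably on $(\bar x,\bar y)$, so Kuratowski--Ryll-Nardzewski applies). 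Then I would push the measure forward in a compatible way. A clean way to arrange this: build a coupling $Q$ on $M \times M$ supported on pairs $(x,y)$ with $d_M(x,y) = d_N(\rho_\sharp x, \rho_\sharp y)$ whose marginals are $\LL_0(\mu)$ and an appropriate lift of $\nu$; then $d_{W_2}(\LL_0(\mu), \cdot) \le (\int d_M^2\, dQ)^{1/2} = (\int d_N^2\, dq)^{1/2} = d_{W_2}(\mu,\nu)$, and the reverse inequality is automatic from $1$-Lipschitzness of $\rho_\sharp{}_\sharp$.

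The main obstacle, and the place where I would need to be careful, is that the lift that makes the distance to $\nu$ exactly right depends on $\nu$ (through the optimal coupling $q$ and the selection $g(\bar x,\bar y)$), so a naive construction gives a lift of $\nu$ that is not the same as $\LL(\nu)$ computed independently — one needs a single map $\LL$ that works simultaneously for all pairs. The resolution I would pursue is to show that the "uniform over the orbit" lift $\LL_0$ already has the isometry property: because each $\theta_{\bar x}$ is the full orbit measure, given the optimal coupling $q$ on $N\times N$ one can transport $\theta_{\bar x}$ to $\theta_{\bar y}$ at cost exactly $d_N(\bar x, \bar y)^2$ — indeed, pick any $g$ realizing the minimum and note that $h \mapsto hg$ preserves $m_G$, so the map $h\cdot s(\bar x) \mapsto hg \cdot s(\bar y)$ (wait: one must instead use that for the \emph{orbit} measures $\theta_{\bar x}, \theta_{\bar y}$ there is a coupling moving each point to a nearest point of the other orbit at the orbit distance, which holds since both are $G$-invariant and the displacement function is $G$-invariant). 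Assembling these fiberwise optimal couplings over $q$ yields a coupling of $\LL_0(\mu)$ and $\LL_0(\nu)$ of total cost $\int d_N^2 \, dq = d_{W_2}(\mu,\nu)^2$, giving $d_{W_2}(\LL_0(\mu),\LL_0(\nu)) \le d_{W_2}(\mu,\nu)$; the opposite inequality follows because $\rho_\sharp{}_\sharp$ is $1$-Lipschitz and $\rho_\sharp{}_\sharp \circ \LL_0 = \id$. Hence $\LL := \LL_0$ works, and the bulk of the write-up is the measurable-selection bookkeeping needed to make the fiberwise couplings glue into a genuine Borel coupling on $M \times M$.
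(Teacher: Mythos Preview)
The paper does not actually prove this proposition: it is quoted verbatim from an external source, \cite[Theorem 3.2]{plift}, and used as a black box. So there is no in-paper argument to compare against.

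Your approach is the natural one and is correct. The lift $\LL_0$ that spreads each $\delta_{\bar x}$ to the $G$-invariant probability $\theta_{\bar x}$ on the orbit (pushforward of Haar under $g\mapsto g\cdot s(\bar x)$; this is independent of the section by bi-invariance of Haar on a compact group) does satisfy $\rho_{\sharp\,\sharp}\circ\LL_0=\id$, and the isometry property follows exactly as you outline. The one step worth writing out cleanly is the fiberwise coupling: for a chosen minimizer $g_0$ with $d_M(s(\bar x),g_0 s(\bar y))=d_N(\bar x,\bar y)$, push $m_G$ forward by $g\mapsto (g\,s(\bar x),\,g g_0\,s(\bar y))$. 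The first marginal is $\theta_{\bar x}$, the second is $\theta_{\bar y}$ by right-invariance of Haar, and since each $g$ acts by isometries the cost is identically $d_N(\bar x,\bar y)^2$. This is precisely the ``coupling moving each point at the orbit distance'' you allude to after your parenthetical correction, and it works regardless of stabilizers (so your hesitation about defining an orbit-to-orbit \emph{map} was well founded, but the coupling sidesteps it). Your detour through a $\nu$-dependent lift is unnecessary and can be deleted; the argument goes straight through with $\LL_0$. The measurable-selection bookkeeping (Borel $s$, Borel choice of $g_0(\bar x,\bar y)$ via Kuratowski--Ryll-Nardzewski, then integrating the fiberwise couplings against an optimal $q$) is routine and you have identified the right tools.
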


Finally we are going to provide the proof of $(\ref{FFDLieWass})  \Rightarrow (\ref{FFDWass})$.  The following observation and its relation to our study were shown to the author by Alexander Lytchak.  

\begin{lemma}\label{LytchaksTrick}
Let $G$ be a  connected  bi-invariant Lie group. 
Consider an action $${\rho:G \rightarrow Iso(\sqrt{2} G \times \sqrt{2} G)},$$ given by 
$$\rho(g)(g_1,g_2) = (gg_1,gg_2).$$
Then the corresponding quotient space $(\sqrt{2} G \times \sqrt{2} G)/G$ is isometric to $G$. 
\end{lemma}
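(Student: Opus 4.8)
The plan is to write down an explicit isometry $\Psi\colon (\sqrt 2\,G\times \sqrt 2\,G)/G\to G$, namely the map induced on orbits by $(g_1,g_2)\mapsto g_1^{-1}g_2$. First I would check that $\Psi$ is well defined and bijective. The diagonal left action sends $(g_1,g_2)$ to $(gg_1,gg_2)$, and $(gg_1)^{-1}(gg_2)=g_1^{-1}g_2$, so the formula is constant on orbits; conversely, if $g_1^{-1}g_2=h_1^{-1}h_2$ then $g:=h_1g_1^{-1}$ carries $(g_1,g_2)$ to $(h_1,h_2)$, which gives injectivity, and surjectivity is clear since $\Psi([e,h])=h$. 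One also notes in passing that the orbit of $(g_1,g_2)$ is the graph of the continuous map $x\mapsto xg_1^{-1}g_2$, hence closed, and the diagonal action is proper, so the quotient is a genuine metric space rather than merely a pseudometric one.

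Next I would unwind the quotient distance. Set $Q=(\sqrt 2\,G\times\sqrt 2\,G)/G$ and write $d$ for the bi-invariant distance on $G$. The product-and-scaling conventions give, for the distance in $Q$ between the two orbits,
\[
d_Q\big([g_1,g_2],[h_1,h_2]\big)^2=\inf_{a\in G}\Big(2\,d(g_1,ah_1)^2+2\,d(g_2,ah_2)^2\Big),
\]
where the single parameter $a$ records the relative group element between the two orbits; this reduction to one parameter is exactly where the diagonal form $\rho(g)(g_1,g_2)=(gg_1,gg_2)$ of the action is used. Substituting $b=g_1^{-1}ah_1$ and invoking left-invariance of $d$ turns the first summand into $2\,d(e,b)^2$; invoking left- and then right-invariance turns the second into $2\,d(c,b)^2$, where $c:=(g_1^{-1}g_2)(h_1^{-1}h_2)^{-1}$. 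Thus everything reduces to the claim
\[
\inf_{b\in G}\big(2\,d(e,b)^2+2\,d(c,b)^2\big)=d(e,c)^2 .
\]

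For the lower bound I would combine the elementary inequality $2x^2+2y^2\ge (x+y)^2$ with the triangle inequality $d(e,b)+d(c,b)\ge d(e,c)$, which holds in any metric space. For the upper bound I would take $b$ to be the midpoint of a minimizing geodesic from $e$ to $c$, so that $d(e,b)=d(c,b)=\tfrac12 d(e,c)$ and the bracket equals $d(e,c)^2$. This is the only point at which the hypotheses are used: a connected Lie group with a bi-invariant metric is complete (its geodesics through $e$ are the one-parameter subgroups, defined for all time), hence geodesic, so the required midpoint exists; alternatively, connectedness alone makes the Riemannian distance a length metric, and approximate midpoints already suffice to compute the infimum. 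Combining the two bounds gives $d_Q([g_1,g_2],[h_1,h_2])^2=d(e,c)^2=d(g_1^{-1}g_2,h_1^{-1}h_2)^2$, i.e. $\Psi$ is a bijective isometry.

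I do not expect a serious obstacle: the argument is essentially one substitution plus a two-line inequality. The only thing requiring genuine care is the bookkeeping with left- versus right-invariance in the reduction to $\inf_{b}\big(2\,d(e,b)^2+2\,d(c,b)^2\big)$, and being explicit that the parametrization of pairs of orbits by a single ``relative'' element is what forces the use of the diagonal action.
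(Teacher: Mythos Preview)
Your argument is correct. The paper actually omits the proof of this lemma entirely, remarking only that it ``is relatively straightforward''; your write-up is exactly the straightforward computation one would expect. The identification $\Psi([g_1,g_2])=g_1^{-1}g_2$ is the natural candidate, your reduction of the quotient distance to $\inf_{b\in G}\big(2\,d(e,b)^2+2\,d(c,b)^2\big)$ via bi-invariance is done cleanly, and the final identity follows from the parallelogram-type inequality $2x^2+2y^2\ge(x+y)^2$ together with (approximate) midpoints in the length metric. There is nothing to compare against in the paper, and nothing to fix in your proof.
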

Proof of Lemma \ref{LytchaksTrick} is relatively straightforward and we omit it.

We denote by $E^n$ the Euclidean space of the dimension $n$. 
We are going to construct a sequence of maps $\wt \LL_m : \PPP(G) \rightarrow \PPP(E^{n(m)})$ indexed by positive integers $m = 1,2,3\dots$, such that bi-Lipschitz distortions of $\wt \LL_m$ tend to zero. 

\textit{Step 1: Construction of maps $\wt \LL_m$.} For $m \in \N$ we denote $M = M(m) = 2^{\frac{m}{2}}$.
Lemma \ref{LytchaksTrick} provides us a tower of groups
$$\dots \overset{p_{m+1}}{\rightarrow} (MG)^{M^2} \overset{p_m}{\rightarrow} \dots \overset{p_3}\rightarrow 2G \times 2G \times 2G \times 2G \overset{p_2}{\rightarrow}  \sqrt{2}G \times \sqrt 2 G \overset{p_1}{\rightarrow} G.$$
By Proposition \ref{LiftLem} we can construct a tower of lifting maps, 
$$ \dots \overset{\LL_{m+1}}{\leftarrow} \PPP((MG)^{M^2}) \overset{\LL_m}{\leftarrow} \dots \overset{\LL_3}{\leftarrow} \PPP(2G \times 2G \times 2G \times 2G) \overset{\LL_2}{\leftarrow}  \PPP(\sqrt{2}G \times \sqrt 2 G) \overset{\LL_1}{\leftarrow} \PPP(G).$$


For a metric space $X$, some Euclidean space $E$, 
a map $A:X\rightarrow E$, and $C > 0$ 
we denote by $A^{(C)}: CX \rightarrow E$ a map given by $A^{(C)}(x) = CA(x)$.
By the  Nash embedding theorem (see \cite{Nash}) there exist
$k \in \N$ and a bijective  Riemanian isometric $C^1$-map $f:G \rightarrow E^k$.
We define a map $F_m:(MG)^{M^2}\rightarrow E^{kM^2}$  by  $$F_m(g_1,\dots,g_{M^2}) =  (f^{(M)}(g_1),\dots,f^{(M)}(g_{M^2})).$$

The required map $\wt \LL_m:\PPP(G) \rightarrow \PPP(E^{kM^2})$ is defined by $$\wt \LL_m = {(F_m)}_\sharp \circ \LL_m \circ \dots \circ \LL_1.$$

\textit{Step 2: Proving that bi-Lipschitz distortions of $\wt \LL_m$ tend to $1$.} 
Note that    $ \LL_1, \dots , \LL_m$ and  ${(F_m)}_\sharp$ are $1$-Lipschitz. Thus, $\wt \LL_m$ is also $1$-Lipschitz.

 We introduce a map $\wt p_m : \IM(F_m) \rightarrow G$ given  by
 $\wt p_m = p_1 \circ \dots \circ p_m  \circ F_m^{-1}.$  
 Note that $ (\wt p_m)_\sharp \circ \wt \LL_m =  \id $.
Thus to show that bi-Lipshitz distortions of $\wt \LL_m$ tend to $1$ 
it suffices to show that 
 $\lim_{m \rightarrow \infty}\vert \wt p_m\vert_{Lip} \le 1$.

We denote by $D$ the diameter of $G$  and by 
$\vert \cdot \vert_{E^n}$ the standard norm in $E^n$. 
For a pair of points  $x,y \in \IM(F_m)$ s.t, $\vert x - y\vert_{E^{kM^2}} > d$ 
we clearly have 
$$ \vert x - y \vert_{E^{kM^2}} >  D  \ge  d_G(\wt p_m(x), \wt p_m(y)).$$

 Note that since $f$ is a Riemannian isometric $C^1$-map and $G$ is compact there exists $L > 0$, s.t
$\vert f^{-1}\vert_{Lip} < L$.  From the construction of $F_m$ we obtain that $\vert {F_m}^{-1}\vert_{Lip} < L$ for every $m \in \N$.

Thus, for every pair of points 
$x,y \in  \IM(F_m)$ s.t, $\vert x - y\vert_{E^{kM^2}} \le  D$ we have 
$$d_G({F_m}^{-1}(x),{F_m}^{-1}(y)) < L D.$$

Thus, 
$$\vert x - y\vert_{E^{kM^2}} \ge d_{(MG)^{M^2}}({F_m}^{-1}(x),{F_m}^{-1}(y)) 
                       \inf_{\wt x, \wt y \in (MG)^{M^2}, d_{(MG)^{M^2}}(\wt x,\wt y) < LD}
					   \frac{\vert F_m(\wt x) - F_m(\wt y)\vert_{E^{kM^2}}}{d_{(MG)^{M^2}}(\wt x, \wt y)} \ge $$
$$\ge   d_G({\wt p_m}(x),{\wt p_m}(y)) 
                      \inf_{\wt x, \wt y \in (MG)^{M^2}, d_{(MG)^{M^2}}(\wt x,\wt y) < LD}
					   \frac{\vert F_m(\wt x) - F_m(\wt y)\vert_{E^{kM^2}}}{d_{(MG)^{M^2}}(\wt x, \wt y)} \ge $$
$$\ge   d_G({\wt p_m}(x),{\wt p_m}(y)) 
                       \inf_{\wt x, \wt y \in (MG), d_{MG}(\wt x,\wt y) < LD}
					   \frac{\vert f^{(M)}(\wt x) - f^{(M)}(\wt y)\vert_{E^{k}}}{d_{MG}(\wt x, \wt y)} = $$
$$=   d_G({\wt p_m}(x),{\wt p_m}(y)) 
                       \inf_{\wt x, \wt y \in G, d_G(\wt x,\wt y) < \frac{LD}{M}}
					   \frac{\vert f(\wt x)  - f(\wt y)\vert_{E^{k}}}{d_{G}(\wt x, \wt y)}.$$

Note that since $f$ is a Riemannian isometric $C^1$-map and $G$ is compact 
$$\inf_{\wt x, \wt y \in G, d_G(\wt x,\wt y) < \frac{LD}{M}}\frac{\vert f(\wt x) - f(\wt y)\vert_{E^{k}}}{d_{G}(\wt x, \wt y)} \underset{m \rightarrow \infty}{\rightarrow} 1.$$

\subsection*{Acknowledgements}
I thank Sergey V. Ivanov and Alexander Lytchak for advising me during this work. 
I'm grateful to Nina Lebedeva for fruitful discussions.
I thank the anonymous referee for numerous corrections. 
The paper is supported by the Russian Science Foundation under grant 16-11-10039.

\bibliography{circle}

\bibliographystyle{plain}

\end{document}